\newtheorem*{maintheorem*}{Main Theorem}
\newtheorem{theorem}{Theorem}[section]
\newtheorem{prop}[theorem]{Proposition}
\newtheorem{cor}[theorem]{Corollary}
\theoremstyle{definition}
\newtheorem{definition}[theorem]{Definition}
\newtheorem{example}[theorem]{Example}
\numberwithin{equation}{section}
\keywords{Puiseux monoids, factorization theory, atomic monoids, elasticity, local elasticity, system of sets of lengths, union of sets of lengths}
\begin{document}
	
	\mbox{}
	\title{On the local k-elasticities of Puiseux monoids}
	\author{Marly Gotti}
	\address{Department of Mathematics\\University of Florida\\Gainesville, FL 32611}
	\email{marlycormar@ufl.edu}
	\date{\today}
	
	\begin{abstract}
		If $M$ is an atomic monoid and $x$ is a nonzero non-unit element of $M$, then the set of lengths $\mathsf{L}(x)$ of $x$ is the set of all possible lengths of factorizations of $x$, where the length of a factorization is the number of irreducible factors (counting repetitions). In a recent paper, F.~Gotti and C.~O'Neil studied the sets of elasticities $\mathcal{R}(P) := \{\sup \mathsf{L}(x)/\inf \mathsf{L}(x) : x \in P\}$ of Puiseux monoids $P$. Here we take this study a step further and explore the local $k$-elasticities of the same class of monoids. We find conditions under which Puiseux monoids have all their local elasticities finite as well as conditions under which they have infinite local $k$-elasticities for sufficiently large $k$. Finally, we focus our study of the $k$-elasticities on the class of primary Puiseux monoids, proving that they have finite local $k$-elasticities if either they are boundedly generated and do not have any stable atoms or if they do not contain $0$ as a limit point. \\
	\end{abstract}
	\medskip
	
	\maketitle
	
	\section{Introduction} \label{sec:introduction}
	
	Rings of integers of algebraic number fields are not necessarily factorial. We can use the class group of a ring of integers $R$ to measure to which extent elements in $R$ fail to have a unique factorization. In~\cite{lC60}, L.~Carlitz characterized the half-factorial rings of integers in terms of their class groups. A friendly survey illustrating this characterization when $R = \mathbb{Z}[\sqrt{-5}]$ is provided in \cite{CGG17}. After the publication of Carlitz's result, many authors attempted to characterize the class group of a general ring of integers in terms of further arithmetical properties describing the non-uniqueness of factorizations in such a ring (Rush \cite{dR83} was the first to give a complete characterization).
	
	More generally, the algebraic invariants of several non-factorial Noetherian domains can be used to understand how far are such domains from being factorial. Because many of the factorization-related questions on integral domains are independent of the ring additive structure, in the last few decades the study of the phenomenon of non-unique factorization has been extended to the setting of atomic monoids. The monograph \cite{GH06b} by A.~Geroldinger and F.~Halter-Koch has significantly influenced the shape of the modern non-unique factorization theory, which considers not only integral domains but also atomic monoids.
	
	The purpose of modern non-unique factorization theory is to measure how far an atomic monoid is from being factorial (or half-factorial). To carry out this measurement, we focus attention on several factorization invariants, which include the system of sets of lengths, the union of sets of lengths (first introduced in \cite{CS98}), and the elasticity. Roughly speaking, the elasticity of an atomic monoid $M$ is given by
	\[
		\rho(M) = \sup \{\rho(x) : x \in M\},
	\]
	where $\rho(x)$ is the quotient of the maximum possible length of factorizations of $x$ by the minimum possible length of factorizations of $x$. Our aim in this paper is to study an $\mathbb{N}$-parametrized local version of the elasticity of Puiseux monoids, which are, up to isomorphism, the additive submonoids of $(\mathbb{Q},+)$ that are not groups.
	
	This paper is organized as follows. In Section~\ref{sec:preliminaries}, we review notation and introduce most of the concepts we shall be using later. In Section~\ref{sec:general case}, we begin our exploration of the local elasticities of atomic Puiseux monoids. In particular, we find conditions under which Puiseux monoids have all their local elasticities finite as well as conditions under which they have infinite local $k$-elasticities for sufficiently large $k$. Lastly, in Section~\ref{sec:primary case}, we target the class of primary Puiseux monoids, proving that they have finite local elasticities if they do not contain stable atoms or if they do not contain $0$ as a limit point.
	\medskip
	
	\section{Definitions \& Notations} \label{sec:preliminaries}
	
	Throughout this paper, we let $\mathbb{N}$ denote the set of positive integers, and we set $\mathbb{N}_0 := \mathbb{N} \cup \{0\}$. For each subset $A$ of $\mathbb{Q}$, we let $A^\bullet$ denote $A \setminus \{0\}$. Also, for all $q \in \mathbb{Q}$ such that $q > 0$, we let $\mathsf{n}(q)$ and $\mathsf{d}(q)$ denote the unique pair of relatively prime positive integers satisfying that $q = \mathsf{n}(q)/\mathsf{d}(q)$. In this case, we call $\mathsf{n}(q)$ and $\mathsf{d}(q)$ the \emph{numerator} and \emph{denominator} of $q$, respectively. Moreover, for a subset $A$ of positive rationals, we set
	\[
		\mathsf{n}(A) := \{\mathsf{n}(a) : a \in A \} \ \text{ and } \ \mathsf{d}(A) := \{\mathsf{d}(a) : a \in A\}. \vspace{2pt}
	\]
	Although most of the definitions given in this section make sense in a much broader context, for the sake of simplicity we will present them in a particular setting which is enough for the treatment of Puiseux monoids. Every monoid here is tacitly assumed to be commutative, cancellative, and reduced (i.e., only the identity is a unit). As we shall be working in a commutative environment, unless otherwise specified we will use additive notation. Let $M$ be a monoid.
	 
	 \begin{definition}
	 	An element $a \in M \setminus \{0\}$ is called an \emph{atom} (i.e., \emph{irreducible}) provided that for all $x,y \in M$ the fact that $a = x+y$ implies that either $x=0$ or $y=0$. Let $\mathcal{A}(M)$ denote the set of all atoms of $M$.
	 \end{definition}
 	\medskip

	If $A$ is a subset of $M$, then the minimal submonoid of $M$ containing $A$ is denoted by $\langle A \rangle$. If $M = \langle A \rangle$, then we say that $M$ is \emph{generated} by $A$ or that $A$ is a \emph{generating set} of $M$. The monoid $M$ is called \emph{finitely generated} provided that it contains a finite generating set.
	
	\begin{definition}
		If $M = \langle \mathcal{A}(M) \rangle$, then we call the monoid $M$ atomic.
	\end{definition}
	\medskip

	Clearly, every generating set of an atomic monoid $M$ contains $\mathcal{A}(M)$. In addition, it is not hard to prove that $M$ is atomic if and only if it contains exactly one minimal generating set, namely $\mathcal{A}(M)$; see, for instance, \cite[Proposition~1.1.7]{GH06b}.
	
	\begin{definition}
		A \emph{Puiseux monoid} is an additive submonoid of $(\mathbb{Q}, +)$ consisting of nonnegative rationals.
	\end{definition}
	\medskip
	
	As we mentioned in the introduction, each additive submonoid of $(\mathbb{Q},+)$ that is not a group is isomorphic to a Puiseux monoid \cite[Theorem~2.9]{rG84}. Puiseux monoids have a fascinating atomic structure. Some of them contain no atoms at all, as it is the case of $\langle 1/2^n : n \in \mathbb{N} \rangle$, while there are others whose sets of atoms are dense in the nonnegative real line \cite[Theorem~3.5]{GGP16}. The atomicity of members of the family of Puiseux monoids has only been recently studied (see \cite{fG17a} and \cite{GG17}).
	
	There are three classes of Puiseux monoids we shall be studying, namely the classes of bounded, strongly bounded, and primary Puiseux monoids. Let $P$ be a Puiseux monoid. We say that $P$ is \emph{bounded} (respectively, \emph{strongly bounded}) if it can be generated by a set $A$ of rational numbers such that $A$ is bounded (respectively, $\mathsf{n}(A)$ is bounded). In addition, $P$ is {\it primary} if it can be generated by a subset of positive rationals whose denominators are pairwise distinct prime numbers.

	Bounded and strongly bounded Puiseux monoids are not necessarily atomic; see, for example, $\langle 1/2^n : n \in \mathbb{N} \rangle$. However, it is not hard to verify that primary monoids are always atomic. Indeed, it was proved in \cite{GG17} that every submonoid of a primary Puiseux monoid is atomic. The class of atomic Puiseux monoids is plentiful as the following theorem indicates.
	
	\begin{theorem} \cite[Theorem~3.10]{GG17}\label{theo:sufficient condition for atomicity}
		Let $P$ be a Puiseux monoid. If $0$ is not a limit point of $P$, then $P$ is atomic.
	\end{theorem}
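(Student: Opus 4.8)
The plan is to exploit the only available hypothesis---that $0$ is not a limit point of $P$---to obtain a \emph{uniform} positive lower bound on the nonzero elements of $P$, and then use that bound to cap the number of summands in any additive decomposition of a fixed element. To begin, I would note that since $0$ is not a limit point of $P$ there exists $\varepsilon > 0$ with $P \cap (0,\varepsilon) = \emptyset$; equivalently, $a \ge \varepsilon$ for every $a \in P^\bullet$. Recall also that $P$ is automatically commutative, cancellative, and reduced (the only unit of $(\mathbb{Q}_{\ge 0},+)$ is $0$), so to prove atomicity it suffices to verify that $P = \langle \mathcal{A}(P) \rangle$.

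Next I would fix an arbitrary $x \in P^\bullet$ and consider all expressions $x = a_1 + \cdots + a_n$ with $n \in \mathbb{N}$ and $a_1, \dots, a_n \in P^\bullet$. At least one such expression exists, namely $x = x$ itself. Since each $a_i \ge \varepsilon$, summing the $n$ inequalities gives $n\varepsilon \le x$, so $n \le x/\varepsilon$. Hence the set of lengths $n$ that occur among such decompositions is finite and nonempty, and we may choose a decomposition $x = a_1 + \cdots + a_n$ for which $n$ is maximal.

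The core claim is that in this maximal decomposition each $a_i$ is an atom of $P$. Indeed, if some $a_i$ failed to be an atom, then by definition we could write $a_i = u + v$ with $u, v \in P$ and $u, v \ne 0$; replacing $a_i$ by the two nonzero summands $u$ and $v$ would produce a decomposition of $x$ into $n+1$ nonzero elements of $P$, contradicting the maximality of $n$. Therefore $x \in \langle \mathcal{A}(P) \rangle$. Since $x \in P^\bullet$ was arbitrary, and $0 \in \langle \mathcal{A}(P) \rangle$ trivially, we conclude $P = \langle \mathcal{A}(P) \rangle$, that is, $P$ is atomic.

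I do not expect any genuine obstacle here: the argument is a short extremal (maximality) argument. The one point deserving care is the very first step---deducing the uniform inequality $a \ge \varepsilon$ for all $a \in P^\bullet$ from the purely topological statement that $0$ is not a limit point of $P$. This is precisely where the hypothesis is used in full, and it is essential: for a Puiseux monoid such as $\langle 1/2^n : n \in \mathbb{N} \rangle$, where $0$ \emph{is} a limit point, no such bound exists and the conclusion fails.
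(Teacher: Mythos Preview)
Your argument is correct. Note, however, that the paper does not supply its own proof of this theorem: it is quoted as \cite[Theorem~3.10]{GG17} and used as a black box, so there is no in-paper proof to compare against. Your extremal argument---extracting a uniform lower bound $\varepsilon$ from the hypothesis that $0$ is not a limit point, bounding the length of any decomposition of $x$ by $x/\varepsilon$, and then taking a decomposition of maximal length---is exactly the standard way to establish this result, and it is self-contained and complete as written.
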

	\medskip
	
	Given a set $S$, it is not hard to verify that the formal sums of elements of $S$ (up to permutation) is a monoid, which is called the \emph{free commutative monoid} on $S$. For an atomic monoid $M$, we let $\mathsf{Z}(M)$ denote the free commutative monoid on $\mathcal{A}(M)$. The elements of $\mathsf{Z}(M)$ have the form $a_1 + \dots + a_n$ for some $a_1, \dots, a_n \in \mathcal{A}(M)$ and are called \emph{factorizations}. It follows immediately that the function $\phi \colon \mathsf{Z}(M) \to M$ defined by $\phi(a_1 + \dots + a_n) = a_1 + \dots + a_n$ is a monoid homomorphism.
	
	\begin{definition}
		The homomorphism $\phi$ given above is called the \emph{factorization homomorphism} of $M$.
	\end{definition}
	\medskip

	If $x \in M$, then the \emph{set of factorizations} of $x$, denoted by $\mathsf{Z}_M(x)$, is defined to be the preimage of $x$ by $\phi$, i.e.,
	\[
		\mathsf{Z}_M(x) = \phi^{-1}(x) \subseteq \mathsf{Z}(M).
	\]
	It follows that $M$ is atomic if and only if $\mathsf{Z}_M(x)$ is nonempty for all $x \in M$. If $z = a_1 + \dots + a_n$ for some $a_1, \dots, a_n$, then $n$ is called the \emph{length} of $z$ and is denoted by $|z|$. For $x \in M$, the \emph{set of lengths} of $x$ is the set
	\[
		 \mathsf{L}_M(x) := \{|z| : z \in \mathsf{Z}_M(x)\}.
	\]
	We write $\mathsf{Z}(x)$ and $\mathsf{L}(x)$ for the respective sets $\mathsf{Z}_M(x)$ and $\mathsf{L}_M(x)$ when there is no risk of ambiguity. In addition, the collection of sets
	\[
		\mathcal{L}(M) := \{ \mathsf{L}(x): x \in M \}
	\]
	is called the \emph{system of sets of lengths} of $M$. Systems of sets of lengths of many families of atomic monoids have been the focus of a great deal of research during the last few decades (see, for example, \cite{ACHP07,GS16,wS09}).
		
	We proceed to introduce unions of sets of lengths and local elasticities. Similar to the system of sets of lengths, the elasticity is another arithmetical invariant used to measure up to what extent factorizations in monoids (or domains) fail to be unique. The concept of elasticity was introduced by R.~Valenza \cite{rV90} in the context of algebraic number theory. The \emph{elasticity} $\rho(M)$ of an atomic monoid $M$ is given by
	\[
		\rho(M) = \sup \{\rho(x) : x \in M\}, \ \text{where} \ \rho(x) = \frac{\sup \mathsf{L}(x)}{\min \mathsf{L}(x)}.
	\]
	For $n \in \mathbb{N}_0$, we define $\mathsf{L}^{-1}(n) := \{x \in M : n \in \mathsf{L}(x)\}$. Now, the \emph{union of sets of lengths} of $M$ containing $n$ is defined to be
	\[
		\mathcal{U}_n(M) = \{|z| : z \in \mathsf{Z}(x) \ \text{for some} \ x \in \mathsf{L}^{-1}(n) \}.
	\]
	
	\begin{definition}
		The \emph{$n$-th local elasticity} of $M$ is defined by
		\[
			\rho_n(M) = \sup \mathcal{U}_n(M).
		\]
	\end{definition}
	\medskip
	
	A numerical semigroup is a cofinite additive submonoid of $(\mathbb{N}_0, +)$. It is well known that every numerical semigroup is finitely generated and, therefore, atomic \cite[Proposition~2.7.8(4)]{GH06b}. See \cite{GR09} for an introduction to numerical semigroups. For a numerical semigroup $N$ with minimal generating set $A$, it was proved in \cite[Section~2]{CHM06} that the elasticity of $N$ is given by $\max A/ \min A$. On the other hand, it is not hard to verify that $\mathcal{U}_n(N)$ is bounded and, therefore, every local elasticity of $N$ is finite. In the next two sections, we will generalize this fact in two different ways to Puiseux monoids.
	\medskip
	
	\section{The General Case} \label{sec:general case}
	
	We begin this section proposing a sufficient condition under which most of the local elasticities of an atomic Puiseux monoid have infinite cardinality. On the other hand, we describe a subclass of Puiseux monoids (containing isomorphic copies of each numerical semigroup) whose local $k$-elasticities are finite.

	If $P$ is a Puiseux monoid, then we say that $a_0 \in \mathcal{A}(P)$ is \emph{stable} provided that the set $\{a \in \mathcal{A}(P) : \mathsf{n}(a) = \mathsf{n}(a_0)\}$ is infinite.
	
	\begin{prop} \label{prop:union of sets of lengths: infinite case}
		Let $P$ be an atomic Puiseux monoid. If $P$ contains a stable atom, then $\rho_k(P)$ is infinite for all sufficiently large $k$.
	\end{prop}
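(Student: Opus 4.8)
The goal is to show that if $P$ contains a stable atom $a_0$, then for all sufficiently large $k$ the union $\mathcal{U}_k(P)$ is unbounded. The idea is to exploit the defining property of a stable atom: there are infinitely many atoms $a \in \mathcal{A}(P)$ sharing the common numerator $m := \mathsf{n}(a_0)$, so writing each such atom as $a = m/\mathsf{d}(a)$, we obtain an infinite set $D$ of denominators with $\{m/d : d \in D\} \subseteq \mathcal{A}(P)$. The key observation is that for any two such atoms $m/d_1$ and $m/d_2$ we have the relation $d_1 \cdot (m/d_1) = d_2 \cdot (m/d_2) = m$, which gives two factorizations of the same element $m \in P$ of lengths $d_1$ and $d_2$. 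Since $D$ is infinite, the ratios $d_1/d_2$ can be made arbitrarily large, which already shows the global elasticity is infinite; the work is to pin this down at a fixed length $k$.

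First I would fix one atom $a_1 = m/d_1$ with $d_1 \in D$, and consider the length $k := d_1$. For every other $d \in D$ with $d > d_1$, the element $x_d := d_1 a_1 = m$ has a factorization $d_1 \cdot a_1$ of length $d_1 = k$, witnessing $m \in \mathsf{L}^{-1}(k)$, and it also has the factorization $d \cdot (m/d)$ of length $d$. Hence $d \in \mathcal{U}_k(P)$ for every $d \in D$ with $d > d_1$, and since $D$ is infinite this forces $\rho_k(P) = \sup \mathcal{U}_k(P) = \infty$ for this particular $k = d_1$. To upgrade this to \emph{all sufficiently large} $k$, I would note that $D$ being infinite means it contains elements $d_1 < d_1' < d_1'' < \cdots$ arbitrarily large, and for each such choice the same argument shows $\rho_{d}(P) = \infty$ whenever $d \in D$; but one wants \emph{every} large $k$, not just those in $D$. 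To bridge this gap, I would instead work with a fixed pair $d_1 < d_2$ in $D$ and "pad" factorizations: for any $k \ge d_1$, write $k = q d_1 + r$ appropriately, or more cleanly, observe that the element $n m := n \cdot (d_1 a_1)$ for suitable $n$ has a factorization of length $n d_1$ into copies of $a_1$ and another of length $n d$ into copies of $m/d$, so by choosing $n$ one realizes a whole arithmetic progression of lengths in $\mathsf{L}^{-1}$, and from any starting point one can reach every sufficiently large $k$.

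The cleanest route, which I would adopt, is this: pick $d_1, d_2 \in D$ with $\gcd$-considerations aside, and for a target length $k$ large, choose a nonnegative integer $t$ with $t d_1 \le k$ and $k - t d_1$ expressible using other atoms or simply take $k$ a multiple-plus-remainder of $d_1$; then the element $(k/d_1)\,m$ (when $d_1 \mid k$) has length $k$ via copies of $a_1$ and length $(k/d_1) d$ via copies of $m/d$ for any $d \in D$, and letting $d \to \infty$ in $D$ shows $\rho_k(P) = \infty$ for every $k$ divisible by $d_1$. For general large $k$, write $k = s d_1 + j$ where $0 \le j < d_1$ and $s \ge 1$, and consider $x := s\,m + j a_1$ if $a_1$ itself can absorb the remainder, or more robustly use that $a_1$ appears in $P$ so $j a_1 + s m \in P$ has a factorization of length $s d_1 + j = k$ and also, replacing the $s$ copies of $m$ by $d$-fold factorizations, a factorization of length $s d + j$, which ranges over an unbounded set as $d$ runs through $D$.

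\textbf{Main obstacle.} The conceptual content — two atoms with equal numerator yield two factorizations of their common numerator-value with different lengths — is immediate; the only real subtlety is the phrase "for all sufficiently large $k$" as opposed to "for infinitely many $k$" or "for $k = d_1$." The padding argument handles this, but one must be careful that padding by copies of a fixed atom $a_1$ genuinely lands back in $\mathsf{L}^{-1}(k)$ and that the padded factorizations still have the claimed lengths; this is where I expect the bookkeeping to require care, though no deep idea. A threshold like $k \ge d_1$ (or perhaps $k$ at least the smallest denominator in $D$) should emerge naturally as the bound beyond which the conclusion holds.
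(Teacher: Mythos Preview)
Your proposal is correct and follows essentially the same approach as the paper: fix the common numerator $m$, observe that $m = d_1 a_1 = d\,(m/d)$ yields $d \in \mathcal{U}_{d_1}(P)$ for every $d \in D$, and then pad by copies of a fixed atom to pass from $k = d_1$ to all $k \ge d_1$. The paper compresses your explicit padding into the one-line observation that $|\mathcal{U}_d(P)| = \infty$ immediately forces $|\mathcal{U}_k(P)| = \infty$ for all $k \ge d$ (since adding a single atom shows $1 + \mathcal{U}_d(P) \subseteq \mathcal{U}_{d+1}(P)$), so your detour through divisibility and remainders is unnecessary but harmless.
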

	
	\begin{proof}
		Suppose that for some $m \in \mathbb{N}$ the set $A := \{a \in \mathcal{A}(P) : \mathsf{n}(a) = m\}$ contains infinitely many elements. Let $\{a_n\}$ be an enumeration of the elements of $A$. Because the elements of $A$ have the same numerator, namely $m$, we can assume that the sequence $\{a_n\}$ is decreasing. Setting $d = \mathsf{d}(a_1)$, we can easily see that $d a_1 = m = \mathsf{d}(a_j) a_j$ for each $j \in \mathbb{N}$. Therefore $\mathsf{d}(a_j) \in \mathcal{U}_d(P)$ for each $j \in \mathbb{N}$. As $\mathsf{d}(A)$ is an infinite set so is $\mathcal{U}_d(P)$. The fact that $|\mathcal{U}_d(P)| = \infty$ immediately implies that $|\mathcal{U}_k(P)| = \infty$ for all $k \ge d$. Hence $\rho_k(P) = \sup \, \mathcal{U}_k(P) = \infty$ for every $k \ge d$.
	\end{proof}
	\medskip
	
	Recall that a Puiseux monoid $P$ is strongly bounded if it can be generated by a set of rationals $A$ whose numerator set $\mathsf{n}(A)$ is bounded. As a direct consequence of Proposition~\ref{prop:union of sets of lengths: infinite case} we obtain the following result.
	
	\begin{cor}
		If $P$ is a non-finitely generated strongly bounded atomic Puiseux monoid, then $\rho_k(P)$ is infinite for all $k$ sufficiently large.
	\end{cor}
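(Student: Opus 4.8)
The plan is to derive the corollary from Proposition~\ref{prop:union of sets of lengths: infinite case}: it suffices to prove that a non-finitely generated strongly bounded atomic Puiseux monoid $P$ must contain a stable atom, and then the conclusion $\rho_k(P) = \infty$ for all sufficiently large $k$ is immediate.

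To produce such a stable atom, I would first control the numerators of the atoms of $P$. Since $P$ is strongly bounded, fix a set $A$ of positive rationals with $P = \langle A \rangle$ and $\mathsf{n}(A)$ bounded, say $\mathsf{n}(a) \le N$ for every $a \in A$. Because $P$ is atomic, every generating set of $P$ contains $\mathcal{A}(P)$ (as recalled right after the definition of atomicity), so in particular $\mathcal{A}(P) \subseteq A$. Hence $\mathsf{n}(\mathcal{A}(P)) \subseteq \mathsf{n}(A) \subseteq \{1, \dots, N\}$; that is, the atoms of $P$ take only finitely many numerators.

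Now I would argue by contradiction with a pigeonhole count. Suppose $P$ has no stable atom. Then, by definition of a stable atom, for each $m \in \{1, \dots, N\}$ the set $\{a \in \mathcal{A}(P) : \mathsf{n}(a) = m\}$ is finite. Combining this with the previous paragraph gives
\[
	\mathcal{A}(P) = \bigcup_{m=1}^{N} \{a \in \mathcal{A}(P) : \mathsf{n}(a) = m\},
\]
which is a finite union of finite sets and therefore finite. Since $\mathcal{A}(P)$ generates the atomic monoid $P$, it follows that $P$ is finitely generated, contradicting the hypothesis. Thus $P$ must contain a stable atom, and Proposition~\ref{prop:union of sets of lengths: infinite case} yields that $\rho_k(P)$ is infinite for all sufficiently large $k$.

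I do not expect a real obstacle here. The only step that genuinely uses atomicity — and the one I would be careful to state precisely — is the passage from ``$P$ has a generating set with bounded numerator set'' to ``$\mathcal{A}(P)$ has bounded numerator set'', which rests on the fact that every generating set of an atomic monoid contains its set of atoms. The rest is a straightforward finiteness argument built directly on the definition of a stable atom.
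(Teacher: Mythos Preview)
Your argument is correct and is exactly the intended one: the paper records this corollary as a ``direct consequence'' of Proposition~\ref{prop:union of sets of lengths: infinite case} without further proof, and you have simply spelled out that consequence by using the pigeonhole principle on $\mathsf{n}(\mathcal{A}(P)) \subseteq \{1,\dots,N\}$ to locate a stable atom.
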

	\medskip	
		
	In contrast to the previous proposition, the next result gives a condition under which Puiseux monoids have finite $k$-elasticity for each $k \in \mathbb{N}$.
	
	\begin{prop} \label{prop:union of sets of lengths: finite case}
		Let $P$ be a Puiseux monoid that does not contain $0$ as a limit point. If $P$ is bounded, then $\rho_k(P) < \infty$ for every $k \in \mathbb{N}$.
	\end{prop}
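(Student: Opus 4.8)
The plan is to show that, for each fixed $k \in \mathbb{N}$, the set $\mathcal{U}_k(P)$ is contained in a bounded set of nonnegative integers; then $\rho_k(P) = \sup \mathcal{U}_k(P) < \infty$ follows immediately. The two hypotheses will be used to produce, respectively, a uniform upper bound on the atoms of $P$ and a uniform positive lower bound on the nonzero elements of $P$. Together these bounds control the lengths of all factorizations of every element of $\mathsf{L}^{-1}(k)$.

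To carry this out, I would first note that $P$ is atomic: since $0$ is not a limit point of $P$, Theorem~\ref{theo:sufficient condition for atomicity} applies. Next, since $P$ is bounded, fix a bounded generating set $A$ of $P$ and set $B := \sup A < \infty$. Because every generating set of an atomic monoid contains its set of atoms (as recalled in Section~\ref{sec:preliminaries}), we have $\mathcal{A}(P) \subseteq A$, and hence every atom of $P$ is at most $B$. On the other hand, since $0$ is not a limit point of $P$, there exists $\varepsilon > 0$ with $P \cap (0,\varepsilon) = \emptyset$; in particular, every nonzero element of $P$ — and so, in particular, every atom — is at least $\varepsilon$.

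Now fix $k \in \mathbb{N}$ and let $x \in \mathsf{L}^{-1}(k)$, so that $x$ has a factorization $a_1 + \dots + a_k$ with each $a_i \in \mathcal{A}(P)$; this gives $x = a_1 + \dots + a_k \le kB$. If $z = b_1 + \dots + b_m \in \mathsf{Z}(x)$ is any factorization of $x$, then $m\varepsilon \le b_1 + \dots + b_m = x \le kB$, whence $|z| = m \le kB/\varepsilon$. Consequently $\mathcal{U}_k(P) \subseteq \{0, 1, \dots, \lfloor kB/\varepsilon\rfloor\}$, which is finite, and therefore $\rho_k(P) \le kB/\varepsilon < \infty$.

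I do not expect a serious technical obstacle here; the argument is essentially a two-sided pigeonhole estimate. The only step worth stating carefully is the passage from ``$P$ is bounded'' to ``$P$ has bounded atoms'', which genuinely uses atomicity through the fact that $\mathcal{A}(P)$ lies inside every generating set — without that, the boundedness of one generating set would not, by itself, constrain the size of the atoms.
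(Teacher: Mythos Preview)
Your proof is correct and follows essentially the same route as the paper: both use atomicity (via Theorem~\ref{theo:sufficient condition for atomicity}) to obtain a uniform upper bound on the atoms from boundedness of $P$, a uniform positive lower bound from the hypothesis that $0$ is not a limit point, and then sandwich any length $\ell \in \mathcal{U}_k(P)$ by $\ell \le kB/\varepsilon$ (the paper writes $\ell < kQ/q$). Your justification of why $\mathcal{A}(P)$ inherits the upper bound is slightly more explicit than the paper's, but the argument is otherwise the same.
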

	
	\begin{proof}
		Because $0$ is not a limit point of $P$, it follows by Theorem~\ref{theo:sufficient condition for atomicity} that $P$ is atomic As $P$ is a bounded Puiseux monoid, $\mathcal{A}(P)$ is a bounded set of rational numbers. Take $q, Q \in \mathbb{Q}$ such that $0 < q < a < Q$ for all $a \in \mathcal{A}(P)$. Now fix $k \in \mathbb{N}$, and suppose that $\ell \in \mathcal{U}_k(P)$. Then there exists $x \in \mathsf{L}^{-1}(k)$ such that $\ell \in \mathsf{L}(x)$. Because $x$ has a factorization of length $k$, it follows that $x < kQ$. Taking $a_1, \dots, a_\ell \in \mathcal{A}(P)$ such that $x = a_1 + \dots + a_\ell$, we find that
		\[
			q \ell < a_1 + \dots + a_\ell = x < kQ.
		\]
		Therefore $\ell < kQ/q$. Because neither $q$ nor $Q$ depends on the choice of $x$, one obtains that $\mathcal{U}_k(P)$ is bounded from above by $kQ/q$. Hence $\rho_k(P) = \sup \mathcal{U}_k(P)$ is finite, and the proof follows.
	\end{proof}
	\medskip
	
	With the following two examples, we shall verify that the conditions of containing a stable atom and not having $0$ as a limit point are not superfluous in Proposition~\ref{prop:union of sets of lengths: infinite case} and Proposition~\ref{prop:union of sets of lengths: finite case}, respectively.
	
	\begin{example}
		Let $\{p_n\}$ be a strictly increasing enumeration of the prime numbers, and consider the following Puiseux monoid:
		\[
		P = \langle A \rangle, \ \text{ where } \ A = \bigg\{ \frac{p_n - 1}{p_n} \ : \ n \in \mathbb{N} \bigg \}.
		\]
		As the denominators of elements in $A$ are pairwise distinct primes, it immediately follows that $\mathcal{A}(P) = A$. Therefore $P$ is atomic. Clearly, $P$ does not contain stable atoms. Because $A$ is bounded so is $P$ (as a Puiseux monoid). On the other hand, $0$ is not a limit point of $P$. Thus, it follows by Proposition~\ref{prop:union of sets of lengths: finite case} that $\rho_k(P)$ is finite for every $k \in \mathbb{N}$.
		Notice also that
		\begin{enumerate}
			\item if $q \in P$ has at least two factorizations with no atoms in common, then $q \in \mathbb{N}$;
			\item by Proposition \ref{prop:union of sets of lengths: finite case}, we have both a lower and an upper bound for any $q \in ~\mathsf{L}^{-1}(k)$.
		\end{enumerate}

		Using the previous two observations, we have created an 
		\href{https://www.github.com/marlycormar/find\_u\_k}{R-script} that generates the sets $U_k$ for $k \in \{1, \dots, 15\}$. Each $U_k$ appears as the $k$-th column in Table~\ref{fig:U_k}.
		
		\begin{table}[h!]
			\centering
			\includegraphics[scale=.45]{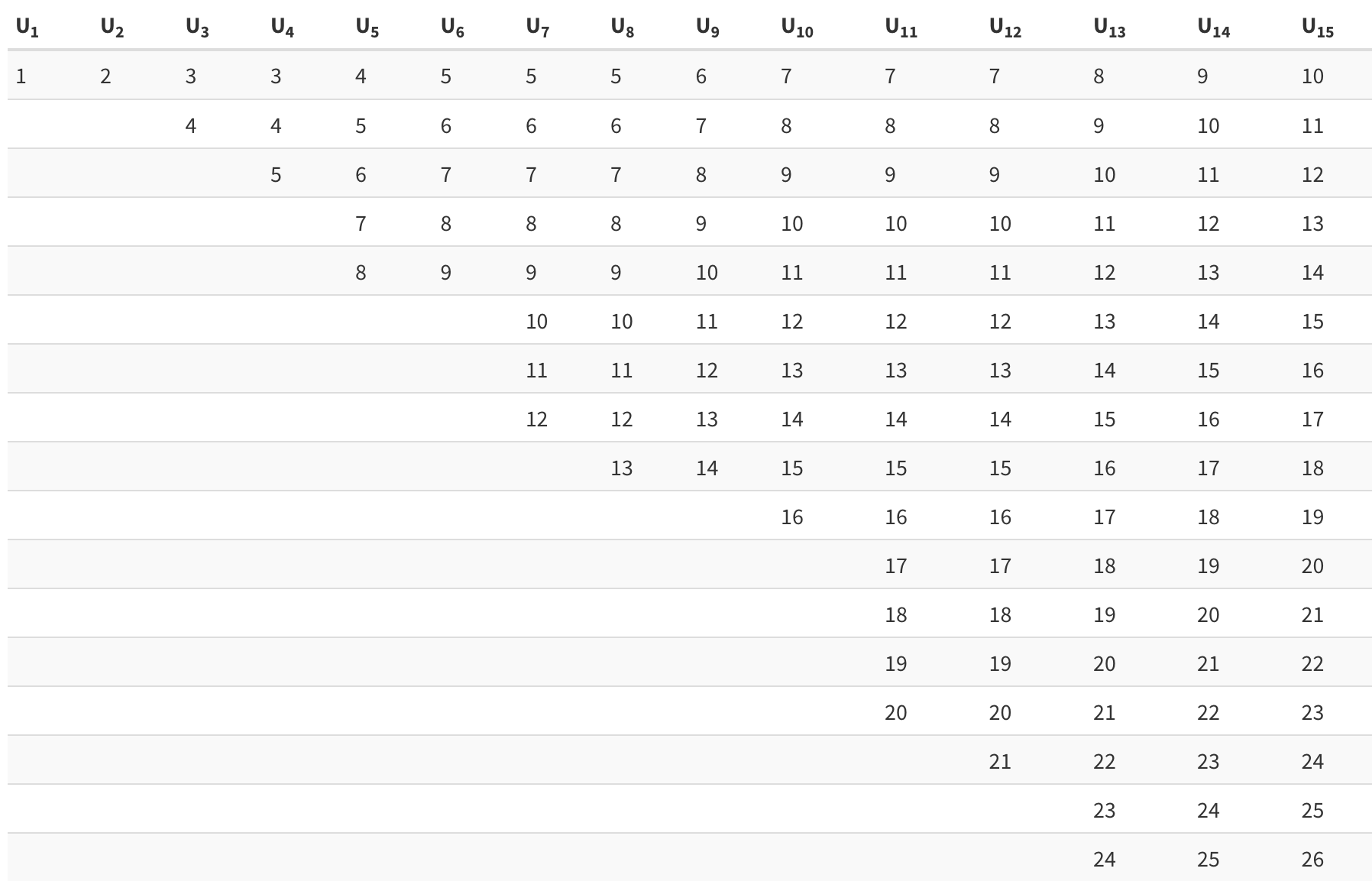}
			\caption{$U_k$ for $k \in \{1, \dots, 15\}$.}
			\label{fig:U_k}
		\end{table}

	\end{example}
	\medskip
	
	\begin{example}
		Let $\{p_n\}$ be an enumeration of the prime numbers, and consider the Puiseux monoid $P = \big\langle 1/p_n : n \in \mathbb{N} \big\rangle$. It is not difficult to argue that $P$ is atomic with $\mathcal{A}(P) = \{1/p_n : n \in \mathbb{N}\}$. As $\mathcal{A}(P)$ is a bounded subset of positive rationals, the Puiseux monoid $P$ is bounded. Notice, however, that $0$ is a limit point of $P$. By Proposition~\ref{prop:union of sets of lengths: infinite case}, it follows that the local elasticities $\rho_k(P)$ are infinite for all $k$ sufficiently large. 
	\end{example}
	\medskip

	The condition of boundedness on Proposition~\ref{prop:union of sets of lengths: finite case} is also required, as shown by the following proposition.
	
	\begin{prop} \label{prop:PM with all its local elasticities infinite}
		There exist infinitely many non-isomorphic Puiseux monoids without $0$ as a limit point that have no finite local elasticities.
	\end{prop}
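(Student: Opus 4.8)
The plan is to produce, for each infinite set $S$ of odd primes, a Puiseux monoid $M_S$ with the two required features, and then to isolate an infinite family of such sets giving pairwise non-isomorphic monoids. For a prime $p \ge 3$ put $b_p := (p+1)/p$ and $A_p := (p^2-p-1)/p$, and for an infinite set $S$ of odd primes set
\[
	M_S := \langle \, 1, \ b_p, \ A_p : p \in S \, \rangle.
\]
Everything will rest on the identity $A_p + 1 = (p-1)\,b_p$, valid because $(p-1)(p+1) = p^2-1 = (p^2-p-1) + p$.

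First I would record the easy facts. Each element of $M_S$ is a nonnegative integer combination of generators, and every generator is at least $1$; hence $M_S \cap (0,1) = \emptyset$, so $0$ is not a limit point of $M_S$, and $M_S$ is atomic by Theorem~\ref{theo:sufficient condition for atomicity}. Since $A_p \to \infty$, the monoid $M_S$ is unbounded (so that Proposition~\ref{prop:union of sets of lengths: finite case} does not apply), and a short check shows that $M_S$ has no stable atom. As $\mathcal{A}(M_S)$ is contained in every generating set, to determine it I need only verify that $1$ and each $b_p$ and each $A_p$ is an atom. That $1$ is an atom is clear, as is the irreducibility of each $b_p$ (the only generators with denominator $p$ are $b_p$ and $A_p$, and $1 < b_p < A_p$ because $p \ge 3$). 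The hard part will be that each $A_p$ is an atom: in a purported factorization $A_p = \mu\, b_p + \nu\, A_p + r$, with $r$ a sum of atoms whose denominators are prime to $p$, one obtains $\nu \le 1$ from $\nu A_p \le A_p$, then $\nu = 0$ after discarding the trivial factorization, and then the requirement that $r = (p^2-p-1-\mu(p+1))/p$ have denominator prime to $p$ forces $\mu \equiv -1 \pmod p$, while $r \ge 0$ forces $\mu \le p-2$, a contradiction. (What makes $A_p$ useful --- that $A_p + 1$ equals a long sum of small atoms --- is exactly what makes its irreducibility subtle; the resolution is that the slack sits on the extra $+1$.) Thus $\mathcal{A}(M_S) = \{1\} \cup \{b_p : p \in S\} \cup \{A_p : p \in S\}$.

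Next, fixing $k \ge 2$, I would take $x_p := A_p + (k-1)\cdot 1 \in M_S$ for $p \in S$. Since $A_p$ is an atom, $A_p + 1 + \dots + 1$ is a factorization of $x_p$ of length $k$, so $x_p \in \mathsf{L}^{-1}(k)$; and by the identity above $x_p = (A_p + 1) + (k-2)\cdot 1 = (p-1)\,b_p + (k-2)\cdot 1$ is a factorization of $x_p$ of length $p + k - 3$, so that $p + k - 3 \in \mathsf{L}(x_p) \subseteq \mathcal{U}_k(M_S)$. Letting $p$ range over the infinite set $S$ shows that $\mathcal{U}_k(M_S)$ is unbounded, whence $\rho_k(M_S) = \infty$. (For any atomic reduced monoid $\rho_0 = 0$ and $\rho_1 = 1$, so this accounts for all of the nontrivial local elasticities.)

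Finally, to get infinitely many pairwise non-isomorphic examples I would invoke the fact that a monoid isomorphism between Puiseux monoids is the restriction of multiplication by a positive rational --- it extends to an isomorphism of the (torsion-free, rank-one) groups they generate, and every isomorphism between nonzero subgroups of $(\mathbb{Q},+)$ is multiplication by a rational. Writing $\mathsf{D}(M)$ for the set of primes dividing $\mathsf{d}(x)$ for some nonzero $x \in M$, one checks directly that $\mathsf{D}(M_S) = S$; and if $M_{S'} = r M_S$ with $r = a/b$ in lowest terms, then passing $r x$ to lowest terms for $x \in M_S$ shows $S' = \mathsf{D}(M_{S'}) \subseteq S \cup \{\,\text{primes dividing } b\,\}$ and, symmetrically, $S \subseteq S' \cup \{\,\text{primes dividing } a\,\}$, so that $M_S \cong M_{S'}$ would force $S \triangle S'$ to be finite. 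Partitioning the odd primes into infinitely many infinite subsets $S_1, S_2, \dots$ (say, by sorting the $n$-th odd prime into the class indexed by the $2$-adic valuation of $n$) then yields pairwise non-isomorphic Puiseux monoids $M_{S_1}, M_{S_2}, \dots$, none of which has $0$ as a limit point and each of which has all of its local elasticities $\rho_k$ (for $k \ge 2$) infinite.
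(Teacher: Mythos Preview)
Your proof is correct and takes a genuinely different route from the paper's. The paper builds each monoid $M_j$ by an inductive process: starting from $A_1 = \{1/p\}$, at each stage it adjoins two new atoms $b_1,b_2$ with a fresh prime denominator $q$ chosen so that $b_1+b_2 = \mathsf{n}(\max A_n)$, and then verifies (via several divisibility arguments) that the enlarged set remains a minimal generating set; the payoff is that $\mathsf{n}(a_n) \in M_j$ has factorizations of lengths $2$ and $\mathsf{d}(a_n)$, forcing $\rho_2(M_j)=\infty$. Your construction is entirely explicit: the single identity $A_p + 1 = (p-1)\,b_p$ does all the work, and the irreducibility of $A_p$ follows from a short congruence computation rather than an inductive invariant. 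This buys you a considerably shorter and more transparent argument, and it yields $\rho_k=\infty$ for every $k\ge 2$ directly rather than by first establishing the case $k=2$. The non-isomorphism step is essentially the same in both proofs (disjoint infinite prime sets together with the fact that Puiseux-monoid isomorphisms are rational multiplications), though your formulation via the symmetric difference $S\triangle S'$ being finite is slightly more general than what is needed. One cosmetic point: in the verification that $A_p$ is irreducible you write ``a sum of atoms whose denominators are prime to $p$''; at that stage you have not yet shown that every $A_q$ is an atom, so it would be cleaner to say ``a sum of generators whose denominators are prime to $p$'', which is all the $p$-adic argument requires.
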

	
	\begin{proof}
		Let $\mathcal{P} = \{S_n : n \in \mathbb{N} \}$ be a family of disjoint infinite sets of odd prime numbers. For each set $S_n$, we will construct an atomic Puiseux monoid $M_n$. Then we will show that $M_i \cong M_j$ implies $i = j$.
		
		Fix $j \in \mathbb{N}$ and take $p \in S_j$. To construct the Puiseux monoid $M_j$, let us inductively create a sequence $\{A_n\}_{n \in \mathbb{N}}$ of finite subsets of positive rationals with $A_1 \subsetneq A_2 \subsetneq \cdots$ such that, for each $k \in \mathbb{N}$, the following three conditions hold:
		\begin{enumerate}
			\item $\mathsf{d}(A_k)$ consists of odd prime numbers; \vspace{2pt}
			\item $\mathsf{d}(\max A_k) = \max \, \mathsf{d}(A_k)$; \vspace{2pt}
			\item $A_k$ minimally generates the Puiseux monoid $P_k = \langle A_k \rangle$.
		\end{enumerate}
		Take $A_1 = \{1/p\}$, with $p$ an odd prime number, and assume we have already constructed the sets $A_1, \dots, A_n$ for some $n \in \mathbb{N}$ satisfying our three conditions. To construct $A_{n+1}$, we take $a = \max A_n$ and let
		\[
			b_1 = \frac{\mathsf{n}(a) \lfloor q/2 \rfloor}{q} \ \text{ and } \ b_2 = \frac{\mathsf{n}(a)\big(q - \lfloor q/2 \rfloor \big)}{q},
		\]
		where $q$ is an odd prime in $S_j$ satisfying $q > \max \mathsf{d}(A_n)$ and $q \nmid \mathsf{n}(a)$. Using the fact that $q \ge 5$ and $\mathsf{d}(a) \ge 3$, one obtains that
		\[
			b_2 > b_1 = \frac{\lfloor q/2 \rfloor}{q} \mathsf{n}(a) > \frac 13 \mathsf{n}(a) \ge a.	
		\]
		Now set $A_{n+1} = A_n \cup \{b_1, b_2\}$. Notice that $b_1 + b_2 = \mathsf{n}(a)$. Clearly, $A_n \subsetneq A_{n+1}$, and condition~(1) is an immediate consequence of our inductive construction. In addition,
		\[
			\mathsf{d}(\max A_{n+1}) = \mathsf{d}(b_2) = q = \max \mathsf{d}(A_{n+1}),
		\]
		which is condition (2). Therefore it suffices to verify that $A_{n+1}$ minimally generates $P_{n+1} = \langle A_{n+1} \rangle$. Because both $b_1$ and $b_2$ are greater than every element in $A_n$, we only need to check that $b_1 \notin P_n$ and $b_2 \notin \langle A_n \cup \{b_1\} \rangle$. Let $d$ be the product of all the elements in $\mathsf{d}(A_n)$. Assuming that $b_1 = a_1 + \dots + a_r$ for some $a_1, \dots, a_r \in A_n$, and multiplying both sides of the same equality by $qd$, we would obtain that $q \mid \mathsf{n}(b_1)$, which contradicts that $q \nmid \mathsf{n}(a)$. Hence $b_1 \notin P_n$. Similarly, one finds that $b_2 \notin P_n$. Suppose, again by contradiction, that $b_2 \in \langle A_n \cup \{b_1\} \rangle$. Then there exist $a'_1 , \dots, a'_s \in A_n$ and $m \in \mathbb{N}$ such that $b_2 = mb_1 + a'_1 + \dots + a'_s$. Notice that $2b_1 = \mathsf{n}(a) (q-1)/q > b_2$, which implies that $m \le 1$. As $b_2 \notin P_n$, it follows that $m=1$. Then we can write
		\begin{align} \label{eq:b_2}
			\frac{\mathsf{n}(a)}q = b_2 - b_1 = \sum_{i=1}^s a'_i. 
		\end{align}
		Once again, we can multiply the extreme parts of the equality~(\ref{eq:b_2}) by $q \, \mathsf{d}(\{a'_1, \dots, a'_s\})$, to obtain that $q \mid \mathsf{n}(a)$, a contradiction. As a result, condition~(3) follows.
		
		Now set $M_j := \cup_{n \in \mathbb{N}} P_n$. As $P_1 \subsetneq P_2 \subsetneq \dots$, the set $M_j$ is, indeed, a Puiseux monoid. We can easily see that $M_j$ is generated by the set $A := \cup_{n \in \mathbb{N}} A_n$. Let us verify now that $\mathcal{A}(M_j) = A$. It is clear that $\mathcal{A}(M_j) \subseteq A$. To check the reverse inclusion, suppose that $a \in A$ is the sum of atoms $a_1, \dots, a_r \in \mathcal{A}(M_j)$. Take $t \in \mathbb{N}$ such that $a, a_1, \dots, a_r \in A_t$. Because $A_t$ minimally generates $P_t$ it follows that $r=1$ and $a = a_1$ and, therefore, that $a \in \mathcal{A}(M_j)$. Hence $\mathcal{A}(M_j) = A$, which implies that $M_j$ is an atomic monoid.
		
		To disregard $0$ as a limit point of $M_j$, it is enough to observe that $\min \mathcal{A}(M_j) = 1/p$. We need to show then that $\rho_k(M_j) = \infty$ for $k \ge 2$. Set $a_n = \max A_n$. When constructing the sequence $\{A_n\}$, we observed that $\mathsf{n}(a_n) = b_{n_1} + b_{n_2}$, where $\{b_{n_1}, b_{n_2}\} = A_{n+1} \setminus A_n$. Because $\mathsf{n}(a_n) \in M_j$ and
		\[
			b_{n_1} + b_{n_2} =\mathsf{n}(a_n) = \mathsf{d}(a_n) a_n,
		\]
		one has that the factorizations $z = b_{n_1} + b_{n_2}$ and $z' = \mathsf{d}(a_n) a_n$ are both in $\mathsf{Z}(\mathsf{n}(a_n))$. Since $|z| = 2$ and $|z'| = \mathsf{d}(a_n)$ it follows that $\mathsf{d}(a_n) \in \mathcal{U}_2(M_j)$. By condition~(2) above, $\mathsf{d}(a_n) =\mathsf{d}(\max A_n) = \max \mathsf{d}(A_n)$. This implies that the set $\{\mathsf{d}(a_n) : n \in \mathbb{N} \}$ contains infinitely many elements. As $\{\mathsf{d}(a_n) : n \in \mathbb{N} \} \subseteq \mathcal{U}_2(M_j)$, we obtain that $\rho_2(M_j) = \infty$. Hence $\rho_k(M_j) = \infty$ for all $k \ge 2$.
		
		We have just constructed an infinite family $\mathcal{F} := \{M_n : n \in \mathbb{N}\}$ of atomic Puiseux monoids with infinite $k$-elasticities. Let us show now that the monoids in $\mathcal{F}$ are pairwise non-isomorphic. To do this we use the fact that the only homomorphisms between Puiseux monoids are given by rational multiplication \cite[Lemma~3.3]{GGP16}. Take $i,j \in \mathbb{N}$ such that $M_i \cong M_j$. Then there exists $r \in \mathbb{Q}$ such that $M_i = rM_j$. Let $m \in M_j$ such that $\mathsf{d}(m) = p$ and $p \nmid \mathsf{n}(r)$ for some prime $p$ in $S_j$. Since the element $rm \in M_i$ and $p \mid \mathsf{d}(rm)$, we must have that the prime $p$ belongs to $S_i$. Because the sets in $\mathcal{P}$ are pairwise disjoint, we conclude that $i = j$. This completes the proof.
	\end{proof}
	\medskip

	Proposition~\ref{prop:union of sets of lengths: infinite case} (respectively, Proposition~\ref{prop:union of sets of lengths: finite case}) establishes sufficient conditions under which a Puiseux monoid has most of its local elasticities infinite (respectively, finite). In addition, we have verified that such conditions are not necessary. For the sake of completeness, we now exhibit a Puiseux monoid that does not satisfy the conditions of either of the propositions above and has no finite $k$-elasticity for any $k \ge 2$.
	
	\begin{example}
		Consider the Puiseux monoid
		\[
			P = \left\langle \left(\frac{2}{3} \right)^n : \, n \in \mathbb{N} \right \rangle.
		\]
		It was proved in \cite[Theorem~6.2]{GG17} that $P$ is atomic and $\mathcal{A}(P) = \{(2/3)^n : n \in \mathbb{N}\}$. In addition, it is clear that $P$ is bounded, has $0$ as a limit point, and does not contain any stable atoms. So neither Proposition~\ref{prop:union of sets of lengths: infinite case} nor Proposition~\ref{prop:union of sets of lengths: finite case} applies to $P$. Now we argue that $\rho_k(P) = \infty$ for each $k \in \mathbb{N}$ such that $k \ge 2$.
		
		Take $k \ge 2$ and set $x = k\frac{2}{3} \in P$. Notice that, by definition, $x \in \mathsf{L}^{-1}(k)$. We can conveniently rewrite $x$ as
		\[
			x = \big((k - 2) + 2\big) \frac{2}{3} = (k - 2)\frac{2}{3} + 3\cdot  \left(\frac{2}{3}\right)^2\! \!,
		\]
		which reveals that $z = (k-2)\frac 23 + 3(\frac 23)^2$ is a factorization of $x$ with $|z| = k+1$. Taking $k' = 3$ to play the role of $k$ and repeating this process as many times as needed, one can obtain factorizations of $x$ of lengths as large as one desires. The fact that $k$ was chosen arbitrarily implies now that $\rho_k(P) = \infty$ for each $k \ge 2$. \\
	\end{example}
	\medskip

	\section{The Primary Case} \label{sec:primary case}
	
	Recall that a Puiseux monoid is said to be primary if it can be generated by a subset of rational numbers whose denominators are pairwise distinct primes. In Proposition~\ref{prop:union of sets of lengths: finite case}, we established a sufficient condition on Puiseux monoids to ensure that all their local $k$-elasticities are finite. Here we restrict our study to the case of primary Puiseux monoids, providing two more sufficient conditions to guarantee the finiteness of all the local $k$-elasticities.
	
	\begin{theorem}\label{theo:sufficient conditions for finite elasticity in ppm}
		For a primary Puiseux monoid $P$, the following two conditions hold.
		\begin{enumerate}
			\item If $0$ is not a limit point of $P$, then $\rho_k(P) < \infty$ for every $k \in \mathbb{N}$. \vspace{3pt}
			\item If $P$ is bounded and has no stable atoms, then $\rho_k(P) < \infty$ for every $k \in \mathbb{N}$.
		\end{enumerate}
	\end{theorem}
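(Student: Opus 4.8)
The plan is to exploit the strong arithmetic rigidity that factorizations enjoy in a primary Puiseux monoid. Since $P$ is primary it is atomic, and one checks that every element of a generating set witnessing the primality of $P$ is an atom, so $\mathcal{A}(P)$ itself consists of rationals $a_i = n_i/p_i$ ($i \in I$) whose denominators $p_i := \mathsf{d}(a_i)$ are pairwise distinct primes and $n_i := \mathsf{n}(a_i)$. The first step is the key lemma: if $z = \sum_i c_i a_i \in \mathsf{Z}(x)$ has support $S = \{i : c_i > 0\}$, then $\mathsf{d}(x) = \prod\{p_j : j \in S,\ p_j \nmid c_j\}$; this follows by clearing denominators in $x = \sum_{i \in S} c_i n_i/p_i$ over $\prod_{i \in S} p_i$ and reducing modulo each $p_j$. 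I would draw three consequences: (a) for $j \in S$ we have $p_j \nmid \mathsf{d}(x)$ if and only if $p_j \mid c_j$; (b) every atom whose denominator divides $\mathsf{d}(x)$ lies in $S$; (c) for such an atom $a_j$, the class $c_j \bmod p_j$ is a nonzero residue determined by $x$ alone (a short congruence computation gives $c_j\, n_j\, (\mathsf{d}(x)/p_j) \equiv \mathsf{n}(x) \pmod{p_j}$, which pins it down regardless of $z$).

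Next I would fix $k \in \mathbb{N}$ and $x \in \mathsf{L}^{-1}(k)$, let $q_1, \dots, q_r$ be the primes dividing $\mathsf{d}(x)$ with corresponding atoms $b_1, \dots, b_r$ and fixed residues $\bar e_1, \dots, \bar e_r$ from (c), and read off from (a)--(c) that every $z \in \mathsf{Z}(x)$ has the form
\[
	z = \sum_{h=1}^{r} (\bar e_h + q_h t_h)\, b_h + \sum_{j} p_j m_j\, a_j,
\]
where $t_h \ge 0$, $m_j \ge 1$, and the second sum ranges over atoms of denominator prime to $\mathsf{d}(x)$. Applying the factorization homomorphism and using $q_h b_h = \mathsf{n}(b_h)$ and $p_j a_j = n_j \in \mathbb{N}$, this produces the decomposition $x = \sum_h \bar e_h b_h + Z$, where $Z := \sum_h t_h \mathsf{n}(b_h) + \sum_j m_j n_j \in \mathbb{N}_0$ equals $x - \sum_h \bar e_h b_h$ and therefore does \emph{not} depend on the chosen factorization, while $|z| = \sum_h \bar e_h + \big(\sum_h q_h t_h + \sum_j p_j m_j\big)$. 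Since the length-$k$ factorization has this same form, $\sum_h \bar e_h \le k$ always, so the whole problem reduces to bounding $\sum_h q_h t_h + \sum_j p_j m_j$ uniformly over $\mathsf{Z}(x)$ and over $x \in \mathsf{L}^{-1}(k)$.

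For part~(1), let $\varepsilon > 0$ be a lower bound for $\mathcal{A}(P)$, available because $0$ is not a limit point of $P$. Writing $q_h = \mathsf{n}(b_h)/b_h \le \mathsf{n}(b_h)/\varepsilon$ and $p_j = n_j/a_j \le n_j/\varepsilon$ gives $\sum_h q_h t_h + \sum_j p_j m_j \le Z/\varepsilon$, so it suffices to bound $Z$ over $\mathsf{L}^{-1}(k)$. Here I expect the main obstacle: unlike in Proposition~\ref{prop:union of sets of lengths: finite case}, the element $x$ itself need not be bounded over $\mathsf{L}^{-1}(k)$ (an atom with small denominator but enormous numerator is already large), so one cannot control $Z$ by controlling $x$. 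The way around it is that $Z$ is the ``integer overshoot'' beyond the fixed part $\sum_h \bar e_h b_h$, and overshoot is expensive in length: applying the decomposition to the length-$k$ factorization of $x$, the relation $\sum_h q_h t_h^{(0)} + \sum_j p_j m_j^{(0)} \le k$ forces $q_h \le k$ whenever $t_h^{(0)} > 0$ and $p_j \le k$ whenever $m_j^{(0)} \ge 1$; since there are only finitely many atoms of denominator at most $k$ — bounded above by some $B_k(P) < \infty$ — one gets $Z \le 2kB_k(P)$, and hence $\rho_k(P) = \sup\mathcal{U}_k(P) \le k + 2kB_k(P)/\varepsilon < \infty$.

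For part~(2), boundedness disposes of that obstacle directly: if $Q$ bounds $\mathcal{A}(P)$ then $x \le kQ$ for every $x \in \mathsf{L}^{-1}(k)$, so $Z \le x \le kQ$. The hypothesis ``no stable atoms'' enters by supplying a positive lower bound on the atoms that can occur in the overshoot: it makes $\{a \in \mathcal{A}(P) : \mathsf{n}(a) \le N\}$ finite for each $N$, hence bounded below by some $\delta > 0$ for $N = \lceil kQ \rceil$; and every $a_j$ in the second sum satisfies $n_j \le m_j n_j \le Z \le kQ$ (and likewise $\mathsf{n}(b_h) \le Z \le kQ$ whenever $t_h > 0$), so all atoms contributing to the sums are $\ge \delta$. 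Then $\delta\big(\sum_h q_h t_h + \sum_j p_j m_j\big) \le \sum_h t_h \mathsf{n}(b_h) + \sum_j m_j n_j = Z \le kQ$, so $|z| \le k + kQ/\delta$ and $\rho_k(P) \le k + kQ/\delta < \infty$. In both parts the resulting bound on $|z|$ is uniform in $x$ and $z$, which is precisely what $\rho_k(P) = \sup \mathcal{U}_k(P) < \infty$ requires.
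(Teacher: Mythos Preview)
Your proof is correct and takes a genuinely different route from the paper's. The paper argues by contradiction in both parts: it lets $\ell$ (respectively $k$) be the least index with $|\mathcal{U}_\ell(P)|=\infty$, uses this minimality to force a long and a short factorization of some $x$ to have disjoint atom supports, then singles out one atom (the largest $b_i$ in part~(1), the smallest in part~(2)) whose prime denominator appears on only one side of the equation; a short $p$-adic divisibility argument then gives the contradiction. Your argument is direct and structural: the key lemma that $\mathsf{d}(x)=\prod\{p_j: j\in S,\ p_j\nmid c_j\}$, together with the residue computation $c_j n_j(\mathsf{d}(x)/p_j)\equiv \mathsf{n}(x)\pmod{p_j}$, lets you split \emph{every} factorization into a fixed part $\sum_h \bar e_h b_h$ and an integer overshoot $Z$ that depends only on $x$; you then bound $Z$ via the length-$k$ witness and convert this into a uniform length bound. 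The paper's approach is shorter and avoids setting up the decomposition, while yours yields explicit upper bounds for $\rho_k(P)$ (essentially $k+kB_k(P)/\varepsilon$ in part~(1) and $k+kQ/\delta$ in part~(2)), and the factorization lemma for primary Puiseux monoids is of independent interest. One minor remark: in part~(1) your computation actually gives $Z=\sum_h t_h^{(0)}q_h b_h+\sum_j m_j^{(0)}p_j a_j\le B_k(P)\big(\sum_h q_h t_h^{(0)}+\sum_j p_j m_j^{(0)}\big)\le kB_k(P)$, so the stated bound $2kB_k(P)$ is a bit looser than what you prove, though of course this does not affect correctness.
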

	
	\begin{proof}
		Because every finitely generated Puiseux monoid is isomorphic to a numerical semigroup, and numerical semigroups have finite $k$-elasticities, we can assume, without loss of generality, that $P$ is not finitely generated.
		
		To prove condition~(1), suppose, by way of contradiction, that $\rho_k(P) = \infty$ for some $k \in \mathbb{N}$. Because $0$ is not a limit point of $P$ there exists $q \in \mathbb{Q}$ such that $0 < q < a$ for each $a \in \mathcal{A}(P)$. Let
		\[
			\ell = \min \{n \in \mathbb{N} : |\mathcal{U}_n(P)| = \infty\}.
		\]
		Clearly, $\ell \ge 2$. Let $m = \max \, \mathcal{U}_{\ell - 1}(P)$. Now take $N \in \mathbb{N}$ sufficiently large such that, for each $a \in \mathcal{A}(P)$, $a > N$ implies that $\mathsf{d}(a) > \ell$. As $\mathcal{U}_\ell(P)$ contains infinitely many elements, there exists $k \in \mathcal{U}_\ell(P)$ such that
		\[
			k > \max\bigg\{\frac{\ell}{q}N, \, m + 1 \bigg\}.
		\]
		In particular, $k-1$ is a strict upper bound for $\mathcal{U}_{\ell - 1}(P)$. As $k \in \mathcal{U}_\ell(P)$, we can choose an element $x \in P$ such that $\{k,\ell\} \subseteq \mathsf{L}(x)$. Take $A = \{a_1, \dots, a_k\} \subsetneq \mathcal{A}(P)$ and $B = \{b_1, \dots, b_\ell\} \subsetneq \mathcal{A}(P)$ with
		\begin{align} \label{eq:different length factorizations 1}
			a_1 + \dots + a_k = x = b_1 + \dots + b_\ell.
		\end{align}
		Observe that the sets $A$ and $B$ must be disjoint, for if $a \in A \cap B$, canceling $a$ in (\ref{eq:different length factorizations 1}) would yield that $\{\ell - 1, k - 1\} \subseteq \mathsf{L}(x - a)$, which contradicts that $k-1$ is a strict upper bound for $\mathcal{U}_{\ell - 1}(P)$. Because $k > (\ell/q)N$, it follows that
		\[
			x > kq > \ell N.
		\]
		Therefore $b := \max\{b_1, \dots, b_\ell\} > N$, which implies that $p = \mathsf{d}(b) > \ell$. Since $a_i \neq b$ for each $i = 1, \dots, k$, it follows that $p \notin \mathsf{d}(\{a_1, \dots, a_k\})$. We can assume, without loss of generality, that there exists $j \in \{1, \dots, \ell\}$ such that $b_i \neq b$ for every $i \le j$ and $b_{j+1} = \dots = b_\ell = b$. This allows us to rewrite (\ref{eq:different length factorizations 1}) as
		\begin{align} \label{eq:different length factorization 2}
			(\ell - j)b = \sum_{i=1}^k a_i - \sum_{i=1}^j b_i.
		\end{align}
		After multiplying \ref{eq:different length factorization 2} by $p$ times the product $d$ of all the denominators of the atoms $\{a_1, \dots, a_k, b_1, \dots, b_j\}$, we find that $p$ divides $d(\ell - j)b$. As $\gcd(p,d) = 1$ and $\ell - j < p$, it follows that $p$ divides $\mathsf{n}(b)$, which is a contradiction. Hence we conclude that $\rho_k(P) < \infty$ for every $k \in \mathbb{N}$. \vspace{4pt}
		
		Now we argue the second condition. Let $\{a_n\}$ be an enumeration of the elements of $\mathcal{A}(P)$ such that $\{\mathsf{d}(a_n)\}$ is an increasing sequence. Set $p_n = \mathsf{d}(a_n)$. Since $P$ has no stable atoms, $\lim \mathsf{n}(a_n) = \infty$. Let $B$ be an upper bound for $\mathcal{A}(P)$.
		
		Suppose, by way of contradiction, that $\rho_n(P) = \infty$ for some $n \in \mathbb{N}$. Let $k$ be the smallest natural number such that $|\mathcal{U}_k(P)| = \infty$. Now take $\ell \in \mathcal{U}_k(P)$ large enough such that $\ell - 1 > \max \, \mathcal{U}_{k-1}(P)$ and for each $a \in \mathcal{A}(P)$ satisfying $a \le Bk/\ell$ we have that $\mathsf{n}(a) > Bk$. Take $x \in \mathsf{L}^{-1}(k)$ such that $a_1 + \dots + a_k = x = b_1 + \dots + b_\ell$ for some $a_1, \dots, a_k, b_1, \dots, b_\ell \in \mathcal{A}(P)$. Now set $b = \min\{b_1, \dots, b_\ell\}$. Then
		\[
			b \le \frac{b_1 + \dots + b_\ell}{\ell} = \frac{a_1 + \dots + a_k}{\ell} \le \frac{Bk}{\ell}.
		\]
		Therefore $\mathsf{n}(b) > Bk$. We claim that $\mathsf{d}(b) \notin \mathsf{d}(\{a_1, \dots, a_k\})$. Suppose by contradiction that this is not the case. Then $b = a_i$ for some $i \in \{1, \dots, k\}$. This implies that $\{k - 1, \ell - 1 \} \subseteq \mathsf{L}(x-b)$, contradicting that $\ell - 1 > \max \, \mathcal{U}_{k-1}(P)$. Hence $\mathsf{d}(b) \notin \mathsf{d}(\{a_1, \dots, a_k\})$. Now assume, without loss of generality, that there exists $j \in \{1, \dots, \ell\}$ such that $b_i \neq b$ for each $i \le j$ and $b_{j+1} = \dots = b_\ell = b$. Write
		\begin{align} \label{eq:different length factorization 3}
			(\ell - j) b = \sum_{i=1}^k a_i - \sum_{i=1}^j b_i.
		\end{align}
		From (\ref{eq:different length factorization 3}) we obtain that $p_\ell$ divides $\ell - j$. As a consequence,
		\[
			Bk \ge \sum_{i=1}^k a_i \ge \frac{\ell - j}{p_\ell} \mathsf{n}(b) \ge \mathsf{n}(b) > Bk,
		\]
		which is a contradiction. Hence $\rho_k(P) < \infty$ for every $k \in \mathbb{N}$.
	\end{proof}
	\medskip		

	The sufficient conditions in part (1) of Theorem~\ref{theo:sufficient conditions for finite elasticity in ppm}(1) and the condition of boundedness in part (2) of Theorem~\ref{theo:sufficient conditions for finite elasticity in ppm} are not necessary, as the following example illustrates.
	
	\begin{example} 
		\ \\[-1em]
		\begin{enumerate}
			\item Consider the primary Puiseux monoid 
			\[
				P = \left \langle \frac{n}{p_n} : n \in \mathbb{N} \right \rangle,
			\]
			where $\{p_n\}$ is the increasing sequence of all prime numbers. Since $\mathcal{A}(P) = \{n/p_n : n \in \mathbb{N}\}$, it follows that $P$ does not contain any stable atom. It is well known that the sequence $\{n/p_n\}$ converges to $0$, which implies that $P$ is bounded. Hence part~(2) of Theorem~\ref{theo:sufficient conditions for finite elasticity in ppm} ensures that $\rho_k(P) < \infty$ for all $k \in \mathbb{N}$. Thus, the reverse implication of part~(1) in Theorem~\ref{theo:sufficient conditions for finite elasticity in ppm} does not hold. \vspace{4pt}
			
			\item Consider now the Puiseux monoid
			\[
				P = \left \langle \frac{p_n^2 - 1}{p_n} : n \in \mathbb{N} \right \rangle,
			\]
			where $\{p_n\}$ is any enumeration of the prime numbers. Since $0$ is not a limit point of $P$, we can apply part~(1) of Theorem~\ref{theo:sufficient conditions for finite elasticity in ppm} to conclude that $\rho_k(P) < \infty$ for all $k \in \mathbb{N}$. Notice, however, that $P$ is not bounded. Therefore, the boundedness in part~(2) of Theorem \ref{theo:sufficient conditions for finite elasticity in ppm} is not a necessary condition.
		\end{enumerate}	
	\end{example}
	\medskip
			
	\section{Acknowledgments}
	
		I would like to thank Salvatore Tringali for proposing some of the questions motivating this project and Felix Gotti for many enlightening conversations about Puiseux monoids. Finally, I would like to thank the anonymous referee, whose helpful suggestions lead to an improvement of the last version of this paper.
	\medskip

\end{document}